\definecolor{webgreen}{rgb}{0,.5,0}
\definecolor{webbrown}{rgb}{.6,0,0}
\newcommand{\seqnum}[1]{\href{http://oeis.org/#1}{\underline{#1}}}
\newcommand{\bta}{010201}
\newcommand{\btb}{010210}
\newcommand{\btc}{010212}
\newcommand{\btd}{012021}
\newcommand{\bte}{012101}
\newcommand{\btf}{012102}
\newcommand{\btg}{020102}
\newcommand{\bth}{020120}
\newcommand{\bti}{020121}
\newcommand{\btj}{021012}
\newcommand{\btk}{021201}
\newcommand{\btl}{021202}
\newcommand{\btm}{101201}
\newcommand{\btn}{101202}
\newcommand{\bto}{102012}
\newcommand{\btp}{120102}
\newcommand{\btq}{120121}
\newcommand{\btr}{120212}
\newcommand{\bts}{121012}
\newcommand{\btt}{201202}
\newcommand{\btu}{210212}
\newcommand{\obta}{102010}
\newcommand{\obtb}{012010}
\newcommand{\obtc}{212010}
\newcommand{\obtd}{120210}
\newcommand{\obte}{101210}
\newcommand{\obtf}{201210}
\newcommand{\obtg}{201020}
\newcommand{\obth}{021020}
\newcommand{\obti}{121020}
\newcommand{\obtj}{210120}
\newcommand{\obtk}{102120}
\newcommand{\obtl}{202120}
\newcommand{\obtm}{102101}
\newcommand{\obtn}{202101}
\newcommand{\obto}{210201}
\newcommand{\obtp}{201021}
\newcommand{\obtq}{121021}
\newcommand{\obtr}{212021}
\newcommand{\obts}{210121}
\newcommand{\obtt}{202102}
\newcommand{\obtu}{212012}
\begin{document}

\theoremstyle{plain}
\newtheorem{theorem}{Theorem}
\newtheorem{corollary}[theorem]{Corollary}
\newtheorem{lemma}[theorem]{Lemma}
\newtheorem{proposition}[theorem]{Proposition}
\theoremstyle{definition}
\newtheorem{definition}[theorem]{Definition}
\newtheorem{example}[theorem]{Example}
\newtheorem{conjecture}[theorem]{Conjecture}
\theoremstyle{remark}
\newtheorem{remark}[theorem]{Remark}
\newtheorem{step}{Step}

\begin{center}
\epsfxsize=4in
\end{center}

\begin{center}
\vskip 1cm{\LARGE\bf An Improved Lower Bound for $n$-Brinkhuis $k$-Triples}
\vskip 1cm
Michael Sollami\\
Ditto Labs\\
Cambridge, MA, U.S.A.\\
\href{mailto:michaelsollami@gmail.com}{\tt michaelsollami@gmail.com} \\
\ \\
Craig C. Douglas\\
University of Wyoming\\
School of Energy Resources and Department of Mathematics\\
1000 E. University Ave., Dept. 3036\\
Laramie, WY 82072, U.S.A.\\
\href{mailto:cdougla6@uwyo.edu}{\tt cdougla6@uwyo.edu}\\
\ \\

Manfred Liebmann\\
Technische Universit\"{a}t M\"{u}nchen\\
Center for Mathematical Sciences\\
Boltzmannstra{\ss}e 3\\
85748 Garching by Munich, Germany\\
\href{mailto:manfred.liebmann@tum.de}{\tt manfred.liebmann@tum.de}
\end{center}

\begin{abstract}
Let $s_n$ be the number of words in the ternary alphabet $\Sigma = \{0, 1, 2\}$ such 
that no subword (or factor) is a square (a word concatenated with itself, e.g., $11$, $1212$, or $102102$).
From computational evidence, $s_n$ grows exponentially at a rate of about
$1.317277^n$. While known upper bounds are already relatively close to the conjectured rate,
effective lower bounds are much more difficult to obtain.
In this paper, we construct a $54$-Brinkhuis $952$-triple, which leads to an
improved lower bound on the number of $n$-letter ternary squarefree words:
\mbox{$952^{n/53} \approx 1.1381531^n$}.
\end{abstract}

\section{Introduction}
\label{Sec:Introduction}

A {\em word} $w$ of length $n$ is a string of $n$ symbols from an alphabet
$\Sigma$.
A word $w$ is said to be {\em squarefree} if it does not contain an adjacent
repetition of a   
{\em subword} (or {\em factor}), i.e., $w$ cannot be written
as $axxb$ for nonempty subwords $a$, $x$, and $b$.
In the field of combinatorics on words, the literature on pattern-avoiding
words is vast and there has always been much progress in the study of
powerfree words such as the {\em binary cubefree} and {\em ternary squarefree}
words (see \cite{Finch_URL, WolframResearch_URL}).

It is easy to see that there are only six nonempty binary squarefree words:
$\{0,1,01,10,101,010\}$.
Using the Prouhet-Thue-Morse sequence (see \cite{Lothaire_1983}) the number of
ternary squarefree words was proven to be infinite.

We denote by $s_n$ the exponentially growing number \cite{Richard_Grimm_2004}
of ternary squarefree words of length $n$ 
\cite{Allouche_Shallit_2003,Lothaire_1983}.
We denote by $\mathcal{A}(n)$ the set of ternary squarefree words of length
$n$.

In Section~\ref{Sec:BrinkhuisTriples}, we define basic properties of
$n$-Brinkhuis $k$-triples.
In Section~\ref{Sec:Searching4BrinkhuisTriples}, we define how we searched
for the $54$-Brinkhuis $952$-triple.
In Section~\ref{Sec:54-Brinkhuis_952-triple}, we produce the newly
discovered $54$-Brinkhuis $952$-triple.
In Section~\ref{Sec:CodeAvailability}, we describe how to get the code that
found the specialized Brinkhuis triple of
Section~\ref{Sec:54-Brinkhuis_952-triple} and how to run it.

\section{$n$-Brinkhuis $k$-triples}
\label{Sec:BrinkhuisTriples}

In this section, we define a $n$-Brinkhuis $k$-triple, prove a theorem
about the lower bound on the growth rate $s$, and provide a history of
estimates for the lower bound.

\begin{definition}
An $n$-Brinkhuis $k$-triple is a set
$\mathcal{B}$ = \{$\mathcal{B}^0$, $\mathcal{B}^1$, $\mathcal{B}^2$\}
of three sets of words 
$\mathcal{B}^i = \{w^i_j ~|~ 1 \le j \le k\}$.
The $w^i_j$ are squarefree words of length $n$ such that for all squarefree
words $i_1i_2i_3$ with
$i_1,i_2,i_3\in\{0,1,2\}$
has the property that
the word $w^{i_1}_{j_1}w^{i_2}_{j_2}w^{i_3}_{j_3}$ of length $3n$ with
$j_1,j_2,j_3\in\{1,2,\cdots,k\}$
is also squarefree.
\end{definition}

An example of an $18$-Brinkhuis $2$-triple \cite{Ekhad_Zeilberger_1998} is
given by,
\[
\begin{array}{lccllc}
\mathcal{B} & = & \{ & \mathcal{B}^0 = \{210201202120102012, & 210201021202102012\}, & \\
            &   &    & \mathcal{B}^1 = \{021012010201210120, & 021012102010210120\}, & \\
            &   &    & \mathcal{B}^2 = \{102120121012021201, & 102120210121021201\}  & \}.
\end{array}
\]

The lower bound on the growth rate is given in the following theorem
\cite{Brinkhuis_1983}:

\begin{theorem} 
\label{Thm:BTLowerBound}
The existence of a special $n$-Brinkhuis $k$-triple implies that the
lower bound on the growth rate of the ternary squarefree words is
\[
k^{1/(n-1)}\leq s = \underset{m\rightarrow\infty}{\lim} (s_{m})^{1/m}.
\]
\end{theorem}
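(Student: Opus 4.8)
The plan is to read the triple as a length-$n$ substitution carrying $k$-fold freedom at each letter, and then to count. For a squarefree word $W=a_1a_2\cdots a_m$ over $\{0,1,2\}$ and a choice vector $c=(c_1,\dots,c_m)\in\{1,\dots,k\}^m$, set $\sigma_c(W)=w^{a_1}_{c_1}w^{a_2}_{c_2}\cdots w^{a_m}_{c_m}$, a word of length $nm$. The engine of the argument is a \textbf{substitution lemma}: for every squarefree $W$ and every choice vector $c$, the image $\sigma_c(W)$ is again squarefree. I would isolate this as the one nontrivial input, after which everything else is bookkeeping.

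Granting the lemma, I would next establish injectivity of the map $(W,c)\mapsto\sigma_c(W)$. Since every image has length $nm$, its decomposition into $m$ consecutive blocks of length $n$ is forced; and because the $3k$ words $w^i_j$ are pairwise distinct, each length-$n$ block determines both its originating letter $a_p$ and its index $c_p$, so that $W$ and $c$ are recovered from $\sigma_c(W)$. As $W$ ranges over the $s_m$ squarefree words of length $m$ and $c$ over the $k^m$ choice vectors, this produces $s_m k^m$ distinct squarefree words of length $nm$, giving the recursion $s_{nm}\ge k^m s_m$.

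The bound then falls out by iteration. Fixing any seed length $m_0\ge 1$ (for instance $m_0=1$, where $s_1=3$) and applying the recursion $t$ times yields $s_{n^t m_0}\ge k^{m_0(n^t-1)/(n-1)}\,s_{m_0}$. Taking $(n^t m_0)$-th roots gives
\[
\bigl(s_{n^t m_0}\bigr)^{1/(n^t m_0)}\ \ge\ k^{(n^t-1)/\bigl((n-1)n^t\bigr)}\,\bigl(s_{m_0}\bigr)^{1/(n^t m_0)}.
\]
Because $s=\lim_{m\to\infty}(s_m)^{1/m}$ exists (submultiplicativity $s_{a+b}\le s_a s_b$ together with Fekete's lemma), I may evaluate this limit along the subsequence $m=n^t m_0$; as $t\to\infty$ the second factor collapses to $1$ and the exponent of $k$ tends to $1/(n-1)$, so $s\ge k^{1/(n-1)}$, as claimed.

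The main obstacle is the substitution lemma, since the definition only guarantees squarefreeness of images of \emph{length-three} squarefree words $i_1i_2i_3$. To promote this to all lengths I would argue by contradiction: suppose $\sigma_c(W)$ contains a square $XX$. Using that the blocks all share the common length $n$ and that each $w^i_j$ is itself squarefree, a synchronization argument forces the two halves of the square into an alignment in which $XX$ meets only a bounded number of consecutive blocks; its presence then projects either to a square in $W$ (contradicting squarefreeness of $W$) or to a forbidden configuration inside the image of a length-three factor (contradicting the defining property of the triple). This is precisely the Crochemore-type reduction, and the additional ``special'' conditions on the triple are exactly what make the synchronization go through, so I expect the technical weight of the proof to sit here rather than in the counting.
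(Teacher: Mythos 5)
Your proposal is correct and follows essentially the same route as the paper: the paper likewise reads the triple as a family of squarefree substitutions (citing Brandenburg, Crochemore, and Leconte for exactly your substitution lemma rather than reproving it), obtains the same counting inequality $s_{nm} \ge k^m s_m$, and passes to the limit. The only difference is cosmetic: the paper concludes directly via $s^{n-1} = \lim_{m\to\infty}\left(s_{mn}/s_m\right)^{1/m} \ge k$, whereas you iterate along the geometric subsequence $n^t m_0$ and invoke Fekete's lemma; both limit arguments are valid.
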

\begin{proof}
We define the a set of uniformly growing morphisms by
\begin{displaymath}
   \rho : \left\{
     \begin{array}{lr}
       0\rightarrow w_{j_0}^0, & \\[0.15cm]
       1\rightarrow w_{j_1}^1, & \\[0.15cm]
       2\rightarrow w_{j_2}^2, &       
     \end{array}
   \right.
\end{displaymath} 
where $1\leq j_0,j_1,j_2 \leq k$.
As proven in \cite{Brandenburg_1983,Crochemore_1982,Leconte_1985}, the $\rho$
are squarefree morphisms mapping each squarefree word of length $m$ to $k^m$
squarefree words of length $nm$.
Thus, existence of an $n$-Brinkhuis $k$-triple indicates that
\[s_{mn}/s_{m} \geq k^m,\,\, \forall\, m,n\geq 1.\]
Since $s=\underset{m\rightarrow\infty}{\lim} (s_{m})^{1/m}$,
\[s^{n-1}= \underset{m\rightarrow\infty}{\lim} \left(\frac{s_{mn}}{s_{m}}\right)^{1/m} \geq k,\]
which yields the lower bound of $s \geq k^{1/(n-1)}$. 
\end{proof}

A history of estimates for the lower bound for $s$ is given in
Table~\ref{Tbl:LowerBounds}.
As is obvious from Theorem~\ref{Thm:BTLowerBound}, the discovery of a
$n$-Brinkhuis $k$-triple for a new pair ($n$,$k$) potentially gives us a new a
lower bound for $s$.

\begin{table}[htbp]
\begin{center}
\begin{tabular}{| c | c | c | c | l |}\hline
$n$ & $k$ & Lower bound                  & Year & Authors\\ \hline
25  & 2   & $2^{n/24}\approx 1.0293022^n$   & 1983 & Brinkhuis \cite{Brinkhuis_1983}\\
22  & 2   & $2^{n/21}\approx 1.0335578^n$   & 1983 & Brandenburg \cite{Brandenburg_1983}\\
18  & 2   & $2^{n/17}\approx 1.0416160^n$   & 1998 & Ekhad and Zeilberger \cite{Ekhad_Zeilberger_1998}\\
41  & 65  & $65^{n/40}\approx 1.1099996^n$  & 2001 & Grimm \cite{Grimm_2001}\\
43  & 110 & $110^{n/42}\approx 1.1184191^n$ & 2003 & Sun \cite{Sun_2003}\\
54  & 952 & $952^{n/53}\approx 1.1381531^n$ & 2016 & Sollami, Douglas, and Liebmann\\
\hline
\end{tabular}
\end{center}
\caption{Lower bounds.}
\label{Tbl:LowerBounds}
\end{table}

\section{Searching for $n$-Brinkhuis $k$-triples}
\label{Sec:Searching4BrinkhuisTriples}

In this section we describe how we searched for $n$-Brinkhuis $k$-triples.

We can pare down the search by systematically determining the prefixes and
suffixes of the words in a special $n$-Brinkhuis $k$-triple.
Grimm \cite{Grimm_2001} proved that only two classes of special $n$-Brinkhuis
$k$-triples must be searched, namely
\[
\mathcal{A}_1(n)=\{w\in \mathcal{A}(n) ~|~ w=012021\{012\}*120210\}\subseteq
\mathcal{A}(n)
\]
and
\[
\mathcal{A}_2(n)=\{w\in \mathcal{A}(n) ~|~ w=012102\{012\}*201210\}\subseteq
\mathcal{A}(n),
\]
where recall that $\mathcal{A}(n)$ is the set of ternary squarefree words of
length $n$.

Let $\overline{w}$ be the reversal of symbols in $w$.
We use this notation to be consistent with earlier papers in this field, e.g.,
\cite{Grimm_2001,Sun_2003} (it is sometimes denoted by $w^R$ by other
authors).
For example, if
\[
w = 0122,~ \overline{w} = 2210
\]
and an example palindrome is
\[
w = 2112 = \overline{w}.
\]

We denote the number of potential words, palindromes, and nonpalindromes for
each set $\mathcal{A}_i(n)$, $i\in\{1,2\}$, by
\begin{align*}
a_i(n) & =|\mathcal{A}_i(n)|,\\
a_{ip}(n) & = |\{w \in \mathcal{A}_i(n) ~|~ w = \overline{w}\}|,\\
a_{in}(n) & = |\{w \in \mathcal{A}_i(n) ~|~ w \neq \overline{w}\}|.
\end{align*}
Clearly, there are no palindromic squarefree words of even length.
Thus,
$a_{1p}(2n) = a_{2p}(2n) = 0$,
$a_{1n}(2n) = a_1(2n)/2$, and
$a_{2n}(2n) = a_2(2n)/2$ \cite{Grimm_2001}.
If a word in $\mathcal{A}_1(n)$ or $\mathcal{A}_2(n)$ is a member of a
special $n$-Brinkhuis $k$-triple, then it must at least generate a Brinkhuis
triple by itself, which motivates the following definition:

\begin{definition}
A word $w$ is {\em admissible} if $\{w,\tau(w),\tau^2 (w)\}$ is a special
$n$-Brinkhuis $k$-triple, where $\tau$ is the permutation
\begin{equation}
\label{equ:tau}
\tau : \left\{
\begin{array}{ccc}
0 & \rightarrow & 1,\\
1 & \rightarrow & 2,\\
2 & \rightarrow & 0.\\
\end{array}\right.
\end{equation}
\end{definition}

As before, we denote the number of admissible words, palindromes, and
nonpalindromes for each set $\mathcal{A}_i(n)$, $i\in\{1,2\}$, by
\begin{align*}
b_i(n) &=|\{w\in \mathcal{A}_i(n) \text{ and $w$ is admissible}\}|,\\
b_{ip}(n) &= |\{w \in \mathcal{A}_i(n) ~|~ w = \overline{w} \text{ and $w$ is admissible}\}|,\\
b_{in}(n) &= |\{w \in \mathcal{A}_i(n) ~|~ w \neq \overline{w} \text{ and $w$ is admissible}\}|.
\end{align*}

The strategy we used to find a special $n$-Brinkhuis $k$-triple begins by
enumerating the set of all admissible words of length $n$.
From this enumeration we determine the largest subset in which any three words
$w_1$, $w_2$, $w_3$, form a special $n$-Brinkhuis triple.

The method we used to find a special $n$-Brinkhuis $k$-triple is summarized
below in three steps:

\begin{step}
\label{Step:all}
Find all admissible words in $\mathcal{A}_1(n)$ and $\mathcal{A}_2(n)$.
\end{step}
\begin{step}
\label{Step:Brinkhuis}
Find all triples of admissible words that generate a special $n$-Brinkhuis
$k$-triple.
\end{step}
\begin{step}
\label{Step:largest}
Find the largest set of admissible words such that all three-elemental subsets
are contained in our list of admissible triples.
\end{step}

Steps~\ref{Step:all} and \ref{Step:Brinkhuis} are essentially precomputations
which involve checking the squarefreeness of words.
A naive algorithm for detecting squares has time complexity of order $O(n^3)$
for words of length $n$ and a fixed length alphabet.
This algorithm was first improved to of order $O(n\, \log\,n)$
\cite{Crochemore_1982} and then further improved to of order $O(n)$
\cite{Allouche_Shallit_2003}.

Experimentally it seems that $\mathcal{A}_1$ is more likely to provide maximum
sized $n$-Brinkhuis triples for large $n$ than generators from the set
$\mathcal{A}_2$ and so we have focused our search to this specific
class~\cite{Grimm_2001}.
It is also simpler to find $n$-Brinkhuis $k$-triples in $\mathcal{A}_1(n)$
where $n$ is even since a maximum number of generators does not necessarily
give the largest Brinkhuis triple (i.e., unless we know that none of the words
are palindromes, as they are for even $n$).

It is in the Step~\ref{Step:largest} that our main difficulty becomes
apparent.
The way we found $n$-Brinkhuis $k$-triples involved solving a purely
combinatorial problem that is an instance of the NP-complete maximum clique
problem for hypergraphs \cite{RotaBulo_Pelillo_2008}.
A maximal hyperclique in a hypergraph on $n$ vertices with hyperedges of
cardinality at most $\aleph$ can be found using a branching algorithm in
$\mathcal{O}(2^{\kappa n})$ time for some $\kappa < 1$, depending only on
$\aleph$ \cite{Bodlaender_Italiano_2013}.

\section{A $54$-Brinkhuis $952$-triple}
\label{Sec:54-Brinkhuis_952-triple}

\begin{theorem} A special $54$-Brinkhuis $952$-triple exists, and thus shows 
\[
s \geq 952^{1/53} \approx 1.1381531 > 110^{1/42} \approx 1.1184191.
\]
\end{theorem}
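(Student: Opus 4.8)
The plan is to prove existence by explicit construction, followed by an appeal to Theorem~\ref{Thm:BTLowerBound}. The numerical inequality $952^{1/53} \approx 1.1381531 > 110^{1/42} \approx 1.1184191$ is then a routine arithmetic verification, so the entire mathematical content lies in exhibiting a set $\mathcal{B} = \{\mathcal{B}^0, \mathcal{B}^1, \mathcal{B}^2\}$ of three collections of $952$ squarefree words, each of length $54$, satisfying the defining property of a special $54$-Brinkhuis $952$-triple. Once such a triple is displayed, the inequality $s \geq 952^{1/53}$ follows immediately from Theorem~\ref{Thm:BTLowerBound} with $n = 54$ and $k = 952$.

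First I would restrict the search space using the structural reduction of Grimm already described in Section~\ref{Sec:Searching4BrinkhuisTriples}: by that result it suffices to look within $\mathcal{A}_1(54)$, i.e. squarefree words of the prescribed form $012021\{012\}^* 120210$, and moreover it suffices to present a single base set $\mathcal{B}^0$, since taking $\mathcal{B}^1 = \tau(\mathcal{B}^0)$ and $\mathcal{B}^2 = \tau^2(\mathcal{B}^0)$ under the cyclic permutation $\tau$ of equation~(\ref{equ:tau}) automatically produces the other two classes by the admissibility machinery. Thus the object to construct is really a set of $952$ admissible words in $\mathcal{A}_1(54)$, any three of which concatenate into a squarefree word whenever their index word $i_1 i_2 i_3$ is squarefree.

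Next I would carry out the three-step procedure of Section~\ref{Sec:Searching4BrinkhuisTriples}: enumerate all admissible words of length $54$ (Step~\ref{Step:all}); determine all triples of admissible words that generate a valid special Brinkhuis triple (Step~\ref{Step:Brinkhuis}); and then extract the largest mutually compatible collection (Step~\ref{Step:largest}). The verification that a given candidate set of $952$ words forms a legitimate $54$-Brinkhuis $952$-triple is itself a finite and mechanical check: one tests squarefreeness of each concatenation $w^{i_1}_{j_1} w^{i_2}_{j_2} w^{i_3}_{j_3}$ over every squarefree index triple, using the linear-time squarefreeness test of \cite{Allouche_Shallit_2003}. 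Because the base class is closed under $\tau$, only the concatenations arising from $\mathcal{B}^0$ need be checked directly, and the rest follow by symmetry.

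The hard part will be Step~\ref{Step:largest}, which as noted is an instance of the NP-complete maximum-clique problem for hypergraphs \cite{RotaBulo_Pelillo_2008}: the number of admissible words of length $54$ is large, and searching for the largest mutually compatible subset is computationally expensive, requiring the branching algorithm of \cite{Bodlaender_Italiano_2013} together with the high-performance search code described in Section~\ref{Sec:CodeAvailability}. Once the search terminates with a compatible set of size $952$, however, the proof concludes non-computationally: the displayed triple is a certificate whose validity can be re-verified independently in linear time per concatenation, and Theorem~\ref{Thm:BTLowerBound} converts its existence into the claimed lower bound on $s$.
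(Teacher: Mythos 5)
Your proposal follows essentially the same route as the paper: restrict to $\mathcal{A}_1(54)$ via Grimm's reduction, present only $\mathcal{B}^0$ and obtain $\mathcal{B}^1, \mathcal{B}^2$ by applying $\tau$, find the $952$-element set by hyperclique search, and invoke Theorem~\ref{Thm:BTLowerBound} with $n=54$, $k=952$. The only thing separating your outline from the paper's proof is the actual certificate --- the explicit list of $952$ words (the paper prints $476$ and takes the rest as reversals) --- which is precisely the computational content that cannot be reproduced blind but whose verification is, as you say, mechanical.
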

\begin{proof}
The proof is by a computational construction of a special Brinkhuis triple.
$\mathcal{B}^0$ is explicitly listed below.
$\mathcal{B}^1$ is constructed by applying the $\tau$ permutation 
\ref{equ:tau} on $\mathcal{B}^0$.
$\mathcal{B}^2$ is constructed by applying the $\tau$ permutation on
$\mathcal{B}^1$.
All three sets are available as plain ASCII files on the journal web site
and \cite{DouglasURL}.

Practical algorithms have been developed to solve the maximum clique problem
(see \cite{Bomze_Budinich_Pardalos_Pelillo_1999} for a comprehensive survey on
methods of finding maximum cliques).
These methods were adapted to solve the corresponding problem for hypergraphs.
We used the {\em Random Hyperclique Search algorithm} (RHCS) to perform our
computer searches for maximum hyper-cliques \cite{Sollami_2013}.

Formally, the first $476$ elements of $\mathcal{B}^0$ are given below.
The remaining $476$ elements are reversals of the first $476$ elements.
\begin{quote}
\noindent
$ \btd \btg \btp \btd \btg \btn \btq \btg \obtd $,\\
$ \btd \btg \btq \obtb \obts \btg \obtd \obti \obtd $,\\
$ \btd \btg \obtd \btt \btp \obte \btt \btp \obtd $,\\
$ \btd \bti \btk \bti \obtb \obtu \btn \btp \obtd $,\\
$ \btd \obtg \obtd \obtp \obtb \obtf \obtp \btf \obtd $,\\
$ \btd \obtg \obtq \obtg \obtd \obtp \obtf \btp \obtd $,\\
$ \btd \obtp \btf \bte \obtt \btd \obtf \btp \obtd$,\\ ~\\
$ \btd \btg \btp \btd \btg \obte \obtp \obtg \obtd $,\\
$ \btd \btg \btq \obtb \obts \bth \obts \btg \obtd $,\\
$ \btd \btg \obtd \btt \btp \obte \obtf \btp \obtd $,\\
$ \btd \bti \btk \bti \btd \btf \btd \btg \obtd $,\\
$ \btd \obtg \obtd \obtp \obtb \obtu \btn \btp \obtd $,\\
$ \btd \obtg \obtq \obtg \btr \bte \obtp \btf \obtd $,\\
$ \btd \obtp \btf \btp \btd \btg \btn \btp \obtd $,\\ ~\\
$ \btd \btg \btp \btd \btg \obte \btt \btp \obtd $,\\
$ \btd \btg \btq \obtb \obts \btl \btn \btp \obtd $,\\
$ \btd \btg \obtd \btt \btp \obtd \obtp \btf \obtd $,\\
$ \btd \bti \btk \bti \btd \btf \btq \btg \obtd $,\\
$ \btd \obtg \obtd \obtp \btd \bth \obts \btg \obtd $,\\
$ \btd \obtg \obtq \obtg \btr \btf \btd \btf \obtd $,\\
$ \btd \obtp \btf \btp \btd \obtf \obtp \btf \obtd $,\\ ~\\
$ \btd \btg \btp \btd \btg \obte \obtf \btp \obtd $,\\
$ \btd \btg \btq \obtb \obtr \btf \btb \btp \obtd $,\\
$ \btd \btg \obtd \btt \btq \bth \obts \btg \obtd $,\\
$ \btd \bti \btk \bti \btd \btg \btq \btg \obtd $,\\
$ \btd \obtg \obtd \obtp \btf \btp \btd \btf \obtd $,\\
$ \btd \obtg \obtq \obtp \obtb \btt \btq \btg \obtd $,\\
$ \btd \obtp \btf \btp \btf \btq \btd \btf \obtd $,\\ ~\\
$ \btd \btg \btp \btd \bti \btd \obtf \btp \obtd $,\\
$ \btd \btg \btq \obtb \obtr \btf \btd \obtg \obtd $,\\
$ \btd \btg \obtd \obtf \btp \obte \btt \btp \obtd $,\\
$ \btd \bti \btk \bti \btd \bth \obts \btg \obtd $,\\
$ \btd \obtg \obtd \obtp \btf \btp \btn \btp \obtd $,\\
$ \btd \obtg \obtq \obtp \obtb \obtf \obtp \btf \obtd $,\\
$ \btd \obtp \btf \btp \btf \obtd \obtp \btf \obtd $,\\ ~\\
$ \btd \btg \btp \btd \obtp \btd \obtf \btp \obtd $,\\
$ \btd \btg \btq \obtb \obtr \btf \btq \btg \obtd $,\\
$ \btd \btg \obtd \obtf \btp \obte \obtf \btp \obtd $,\\
$ \btd \bti \btk \bti \btg \btp \btd \btf \obtd $,\\
$ \btd \obtg \obtd \obtp \btf \btq \btd \btf \obtd $,\\
$ \btd \obtg \obtq \obtp \obtb \obtu \btn \btp \obtd $,\\
$ \btd \obtp \btf \btp \btm \bti \btd \btf \obtd $,\\ ~\\
$ \btd \btg \btp \btd \obtp \btf \btb \btp \obtd $,\\
$ \btd \btg \btq \obtb \obtr \btg \btn \btp \obtd $,\\
$ \btd \btg \obtd \obtf \btp \btq \btd \btf \obtd $,\\
$ \btd \bti \btk \bti \btg \btp \btd \btg \obtd $,\\
$ \btd \obtg \obtd \obtp \btf \obtd \btt \btp \obtd $,\\
$ \btd \obtg \obtq \obtp \btd \btg \btn \btp \obtd $,\\
$ \btd \obtp \btf \btp \btm \btl \btn \btp \obtd $,\\ ~\\
$ \btd \btg \btp \btd \obtp \btf \btd \btf \obtd $,\\
$ \btd \btg \btq \obtb \obtr \btg \btq \btg \obtd $,\\
$ \btd \btg \obtd \obtf \btp \btq \btd \btg \obtd $,\\
$ \btd \bti \btk \bti \btg \btq \btd \btf \obtd $,\\
$ \btd \obtg \obtd \obtp \obtg \btr \btb \btp \obtd $,\\
$ \btd \obtg \obtq \obtp \btd \obtf \obtp \btf \obtd $,\\
$ \btd \obtp \btf \btp \btn \btp \btd \btf \obtd $,\\ ~\\
$ \btd \btg \btp \btd \obtp \btf \btd \btg \obtd $,\\
$ \btd \btg \btq \obtb \obtr \bth \obts \btg \obtd $,\\
$ \btd \btg \obtd \obtf \btp \obtd \obtp \btf \obtd $,\\
$ \btd \bti \btk \bti \btg \btq \btd \btg \obtd $,\\
$ \btd \obtg \obtd \obtp \obtg \obtq \obtp \btf \obtd $,\\
$ \btd \obtg \obtq \obtp \btf \btp \btd \btf \obtd $,\\
$ \btd \obtp \btf \btq \obtb \obtf \obtp \btf \obtd $,\\ ~\\
$ \btd \btg \btp \btd \obtp \btf \btd \obtg \obtd $,\\
$ \btd \btg \btq \btd \btg \obte \btt \btp \obtd $,\\
$ \btd \btg \obtd \obtf \btp \obtd \btt \btp \obtd $,\\
$ \btd \bti \btk \bti \btg \obtd \obtp \btf \obtd $,\\
$ \btd \obtg \obtd \obtp \obtg \obtq \obtf \btp \obtd $,\\
$ \btd \obtg \obtq \obtp \btf \btq \btd \btf \obtd $,\\
$ \btd \obtp \btf \btq \btd \btg \btn \btp \obtd$,\\ ~\\
$ \btd \btg \btp \btd \obtp \btf \btq \btg \obtd $,\\
$ \btd \btg \btq \btd \btg \obtd \obtq \obtg \obtd $,\\
$ \btd \btg \obtd \obtf \obtd \obtp \obtf \btp \obtd $,\\
$ \btd \bti \btk \bti \btg \obtd \btt \btp \obtd $,\\
$ \btd \obtg \obtd \obtp \obtf \btp \btd \btf \obtd $,\\
$ \btd \obtg \obtq \obtp \btf \obtd \btt \btp \obtd $,\\
$ \btd \obtp \btf \btq \btd \obtf \obtp \btf \obtd$,\\ ~\\
$ \btd \btg \btp \btd \obtp \btf \obtd \obti \obtd $,\\
$ \btd \btg \btq \btd \btg \obtd \btt \btp \obtd $,\\
$ \btd \btg \obtd \obtf \btr \bte \obtp \btf \obtd $,\\
$ \btd \bti \btk \btj \obth \btr \btb \btp \obtd $,\\
$ \btd \obtg \obtd \obtp \obtf \btp \btn \btp \obtd $,\\
$ \btd \obtg \obtq \obtf \btp \obte \btt \btp \obtd $,\\
$ \btd \obtp \btf \btq \btg \btp \btd \btf \obtd $,\\ ~\\
$ \btd \btg \btp \btd \obtp \obtn \obtl \obti \obtd $,\\
$ \btd \btg \btq \btd \bth \obtj \obtk \obti \obtd $,\\
$ \btd \btg \obtd \obtf \btr \btf \btd \obtg \obtd $,\\
$ \btd \bti \btk \btj \obta \obtu \btn \btp \obtd $,\\
$ \btd \obtg \obtd \obtp \obtf \btp \btq \btg \obtd $,\\
$ \btd \obtg \obtq \obtf \btp \obte \obtf \btp \obtd $,\\
$ \btd \obtp \btf \btq \btg \btq \btd \btf \obtd $,\\ ~\\
$ \btd \btg \btp \btd \obtp \obtt \btb \btp \obtd $,\\
$ \btd \btg \btq \btd \bth \obtu \btn \btp \obtd $,\\
$ \btd \btg \obtd \obtf \obtc \obtf \obtp \btf \obtd $,\\
$ \btd \bti \btk \btj \bto \btj \obtk \obti \obtd $,\\
$ \btd \obtg \obtd \obtp \obtf \obtd \obtp \btf \obtd $,\\
$ \btd \obtg \obtq \obtf \btp \obtd \obtp \btf \obtd $,\\
$ \btd \obtp \btf \btq \btg \obtd \obtp \btf \obtd $,\\ ~\\
$ \btd \btg \btp \btd \obtp \obtt \btd \btf \obtd $,\\
$ \btd \btg \btq \btd \bti \btd \obtf \btp \obtd $,\\
$ \btd \btg \obtd \obtf \obtu \btn \btq \btg \obtd $,\\
$ \btd \bti \btk \btj \bto \btn \btq \btg \obtd $,\\
$ \btd \obtg \obtd \obtp \obtf \obtd \btt \btp \obtd $,\\
$ \btd \obtg \obtq \obtf \btp \obtd \btt \btp \obtd $,\\
$ \btd \obtp \btf \btq \btg \obtd \btt \btp \obtd $,\\ ~\\
$ \btd \btg \btp \btd \obtp \obtt \btd \obtg \obtd $,\\
$ \btd \btg \btq \btd \obtg \obtd \obtp \btf \obtd $,\\
$ \btd \btg \obtd \obtf \obtr \btf \btd \btf \obtd $,\\
$ \btd \bti \btk \btj \obtk \obto \obtl \obti \obtd $,\\
$ \btd \obtg \obtd \obtp \obtf \btr \btb \btp \obtd $,\\
$ \btd \obtg \obtq \obtf \obtd \obtq \obtp \btf \obtd $,\\
$ \btd \obtp \btf \btq \bth \obtu \btn \btp \obtd $,\\ ~\\
$ \btd \btg \btp \btd \obtf \btp \btd \btf \obtd $,\\
$ \btd \btg \btq \btd \obtg \obti \obtk \obti \obtd $,\\
$ \btd \btg \obtd \obtf \obtr \btg \btn \btp \obtd $,\\
$ \btd \bti \btk \obtj \bto \btj \obtk \obti \obtd $,\\
$ \btd \obtg \obtd \obtp \obtf \btt \btq \btg \obtd $,\\
$ \btd \obtg \obtq \obtf \obtd \obtq \obtf \btp \obtd $,\\
$ \btd \obtp \btf \obtd \btp \btq \btd \btf \obtd $,\\ ~\\
$ \btd \btg \btp \btd \obtf \btp \btd \btg \obtd $,\\
$ \btd \btg \btq \btd \obtp \btd \obtf \btp \obtd $,\\
$ \btd \btg \obtd \obtf \obtr \btg \btq \btg \obtd $,\\
$ \btd \bti \btk \obtj \bto \btn \btq \btg \obtd $,\\
$ \btd \obtg \obtd \obtp \obtn \btu \btb \btp \obtd $,\\
$ \btd \obtg \obtq \obtf \obtd \obtp \obtf \btp \obtd $,\\
$ \btd \obtp \btf \obtd \btp \obtd \obtp \btf \obtd $,\\ ~\\
$ \btd \btg \btp \btd \obtf \btp \btn \btp \obtd $,\\
$ \btd \btg \btq \btd \obtp \btf \btd \btf \obtd $,\\
$ \btd \btg \obtd \obtf \obtr \btg \obtd \obti \obtd $,\\
$ \btd \bti \btk \obtj \bto \obtk \obts \btg \obtd $,\\
$ \btd \obtg \obtd \obtp \obtt \btd \obtp \btf \obtd $,\\
$ \btd \obtg \obtq \obtf \obtd \btt \btq \btg \obtd $,\\
$ \btd \obtp \btf \obtd \btp \obtd \btt \btp \obtd $,\\ ~\\
$ \btd \btg \btp \btd \obtf \btp \btq \btg \obtd $,\\
$ \btd \btg \btq \btd \obtp \btf \btd \btg \obtd $,\\
$ \btd \bti \obtb \btt \btm \bti \btd \btf \obtd $,\\
$ \btd \bti \btk \obtj \obtm \obtt \btb \btp \obtd $,\\
$ \btd \obtg \obtd \obtp \obtt \btd \obtf \btp \obtd $,\\
$ \btd \obtg \obtq \obtf \btr \btf \btb \btp \obtd $,\\
$ \btd \obtp \btf \obtd \btr \bte \obtp \btf \obtd $,\\ ~\\
$ \btd \btg \btp \btd \obtf \obtd \obtp \btf \obtd $,\\
$ \btd \btg \btq \btd \obtp \btf \btd \obtg \obtd $,\\
$ \btd \bti \obtb \obtf \obtp \obtf \btt \btp \obtd $,\\
$ \btd \bti \btk \obtj \obtm \obtt \btd \btf \obtd $,\\
$ \btd \obtg \obtd \obtp \obtt \bte \obtp \btf \obtd $,\\
$ \btd \obtg \obtq \obtf \btr \btf \btd \btf \obtd $,\\
$ \btd \obtp \btf \obtd \obti \obtd \obtp \btf \obtd $,\\ ~\\
$ \btd \btg \btp \btd \obtf \obtd \btt \btp \obtd $,\\
$ \btd \btg \btq \btd \obtp \btf \obtd \obti \obtd $,\\
$ \btd \bti \obtb \obtf \obtc \btt \btq \btg \obtd $,\\
$ \btd \bti \btk \obtj \obtm \obto \obtl \obti \obtd $,\\
$ \btd \obtg \obtd \obtf \btp \obte \btt \btp \obtd $,\\
$ \btd \obtg \obtq \obtf \btt \obte \obtp \btf \obtd $,\\
$ \btd \obtp \btf \obtd \btt \obte \obtp \btf \obtd $,\\ ~\\
$ \btd \btg \btp \btd \obtf \btr \btb \btp \obtd $,\\
$ \btd \btg \btq \btd \obtp \obtt \btb \btp \obtd $,\\
$ \btd \bti \obtb \obtf \obtu \btn \btq \btg \obtd $,\\
$ \btd \bti \btk \obtj \obtm \btu \btb \btp \obtd $,\\
$ \btd \obtg \obtd \obtf \btp \obte \obtf \btp \obtd $,\\
$ \btd \obtg \obtq \obtf \btt \obte \obtf \btp \obtd $,\\
$ \btd \obtp \btf \obtd \btt \btp \btd \btf \obtd $,\\ ~\\
$ \btd \btg \btp \btd \obtf \btr \btb \obti \obtd $,\\
$ \btd \btg \btq \btd \obtp \obtt \btd \btf \obtd $,\\
$ \btd \bti \obtb \obtu \btm \bth \obts \btg \obtd $,\\
$ \btd \bti \btk \obtj \obtk \obtq \obtp \btf \obtd $,\\
$ \btd \obtg \obtd \obtf \btp \btq \btd \btf \obtd $,\\
$ \btd \obtg \obtq \obtf \btt \btq \btd \btf \obtd $,\\
$ \btd \obtp \btf \obtd \btt \btq \btd \btf \obtd $,\\ ~\\
$ \btd \btg \btp \bte \obtg \obtd \obtq \obtg \obtd $,\\
$ \btd \btg \btq \btd \obtp \obtt \btd \obtg \obtd $,\\
$ \btd \bti \obtb \obtu \btm \bti \btd \btf \obtd $,\\
$ \btd \bti \btk \obtj \obtk \obto \obtl \obti \obtd $,\\
$ \btd \obtg \obtd \obtf \btp \obtd \obtp \btf \obtd $,\\
$ \btd \obtg \obtq \obtn \obtg \obtd \obtp \btf \obtd $,\\
$ \btd \obtp \btf \obtd \obtf \obtu \btn \btp \obtd $,\\ ~\\
$ \btd \btg \btp \bte \obtp \btf \btd \btf \obtd $,\\
$ \btd \btg \obtd \btp \btd \btf \btb \btp \obtd $,\\
$ \btd \bti \obtb \obtu \btn \obte \obtp \btf \obtd $,\\
$ \btd \bti \btk \obtj \obto \btl \btn \btp \obtd $,\\
$ \btd \obtg \obtd \obtf \btp \obtd \btt \btp \obtd $,\\
$ \btd \obtg \obtq \obtn \obtg \obtq \obtp \btf \obtd $,\\
$ \btd \obtp \obtn \obtg \obtd \obtq \obtp \btf \obtd $,\\ ~\\
$ \btd \btg \btp \bte \obtp \btf \btd \btg \obtd $,\\
$ \btd \btg \obtd \btp \btd \btf \btd \obtg \obtd $,\\
$ \btd \bti \obtb \obtu \btn \obte \btt \btp \obtd $,\\
$ \btd \bti \btk \obtj \obtc \btt \btq \btg \obtd $,\\
$ \btd \obtg \obtd \obtf \obtd \obtq \obtp \btf \obtd $,\\
$ \btd \obtg \obtq \obtn \obtl \bto \btn \btp \obtd $,\\
$ \btd \obtp \obtn \obtp \btf \btp \btd \btf \obtd $,\\ ~\\
$ \btd \btg \btp \bte \obtp \btf \btd \obtg \obtd $,\\
$ \btd \btg \obtd \btp \btd \btf \btq \btg \obtd $,\\
$ \btd \bti \obtb \obtu \btn \obte \obtf \btp \obtd $,\\
$ \btd \bti \btk \obtj \obtc \obtf \obtp \btf \obtd $,\\
$ \btd \obtg \obtd \obtf \obtd \obtp \obtf \btp \obtd $,\\
$ \btd \obtg \obtq \obtn \obtl \obtq \obtp \btf \obtd $,\\
$ \btd \obtp \obtn \obtp \btf \btq \btd \btf \obtd$,\\ ~\\
$ \btd \btg \btp \bte \obtp \btf \btq \btg \obtd $,\\
$ \btd \btg \obtd \btp \btd \btg \btn \btp \obtd $,\\
$ \btd \bti \obtb \obtu \btn \obta \obts \btg \obtd $,\\
$ \btd \bti \btk \obto \btk \bth \obts \btg \obtd $,\\
$ \btd \obtg \obtd \obtf \obtd \btt \btq \btg \obtd $,\\
$ \btd \obtg \obtq \obtn \obto \btl \btn \btp \obtd $,\\
$ \btd \obtp \obtn \obtp \btf \obtd \btt \btp \obtd $,\\ ~\\
$ \btd \btg \btp \bte \obtp \btf \obtd \obti \obtd $,\\
$ \btd \btg \obtd \btp \btd \btg \btq \btg \obtd $,\\
$ \btd \bti \obtb \obtu \btn \bto \obtk \obti \obtd $,\\
$ \btd \bti \btk \obto \btk \bti \btd \btf \obtd $,\\
$ \btd \obtg \obtd \obtf \btr \bte \obtp \btf \obtd $,\\
$ \btd \obtg \obtq \obtn \btu \bte \obtp \btf \obtd $,\\
$ \btd \obtp \obtn \obtp \obtf \btp \btd \btf \obtd $,\\ ~\\
$ \btd \btg \btp \bte \obtp \obtf \obtp \btf \obtd $,\\
$ \btd \btg \obtd \btp \btd \btg \obtd \obti \obtd $,\\
$ \btd \bti \obtb \obtu \btn \btp \btd \btf \obtd $,\\
$ \btd \bti \btk \obto \btl \obta \obts \btg \obtd $,\\
$ \btd \obtg \obtd \obtf \btr \btf \btd \btf \obtd $,\\
$ \btd \obtg \obtq \obtn \btu \btf \btb \btp \obtd $,\\
$ \btd \obtp \obtn \obtp \obtf \obtd \obtp \btf \obtd $,\\ ~\\
$ \btd \btg \btp \bte \obtp \obtf \obtp \obtg \obtd $,\\
$ \btd \btg \obtd \btr \bta \obtt \btb \btp \obtd $,\\
$ \btd \bti \obtb \obtu \btn \btp \btd \btg \obtd $,\\
$ \btd \bti \btk \obto \btl \bto \obtk \obti \obtd $,\\
$ \btd \obtg \obtd \obtf \btr \btf \btd \btg \obtd $,\\
$ \btd \obtg \obtq \obtn \btu \btf \btd \btf \obtd $,\\
$ \btd \obtp \obtn \obtp \obtf \obtd \btt \btp \obtd $,\\ ~\\
$ \btd \btg \btp \bte \obtp \obtf \btt \btp \obtd $,\\
$ \btd \btg \obtd \btr \bta \obtj \obtk \obti \obtd $,\\
$ \btd \bti \obtb \obtu \btn \btp \obtd \obti \obtd $,\\
$ \btd \bti \btk \obto \obtn \obtp \obtf \btp \obtd $,\\
$ \btd \obtg \obtd \obtf \obtc \btt \btq \btg \obtd $,\\
$ \btd \obtg \obtq \obtt \btb \btp \btd \btf \obtd $,\\
$ \btd \obtp \obtn \obtl \obtm \obtt \btd \btf \obtd$,\\ ~\\
$ \btd \btg \btp \bte \obtp \obtn \obtl \obti \obtd $,\\
$ \btd \btg \obtd \btr \bta \btu \btb \obti \obtd $,\\
$ \btd \bti \obtb \obtu \obtk \bto \obtk \obti \obtd $,\\
$ \btd \bti \btk \obto \obtn \btu \btb \btp \obtd $,\\
$ \btd \obtg \obtd \obtf \obtc \obtf \obtp \btf \obtd $,\\
$ \btd \obtg \obtq \obtt \btb \obtq \obtf \btp \obtd $,\\
$ \btd \obtp \obtn \obto \btk \bti \btd \btf \obtd$,\\ ~\\
$ \btd \btg \btp \bte \obtp \obtt \btb \btp \obtd $,\\
$ \btd \btg \obtd \btr \btb \btp \btd \btg \obtd $,\\
$ \btd \bti \obtb \obtu \obtk \obto \obtl \obti \obtd $,\\
$ \btd \bti \btk \obto \obtl \bto \btn \btp \obtd $,\\
$ \btd \obtg \obtd \obtf \obtu \btn \btq \btg \obtd $,\\
$ \btd \obtg \obtq \obtt \btc \bte \obtp \btf \obtd $,\\
$ \btd \obtp \obtn \btu \btb \btp \btd \btf \obtd$,\\ ~\\


$ \btd \btg \btp \bte \obtp \obtt \btb \obti \obtd $,\\
$ \btd \btg \obtd \btr \btb \obtd \obtp \btf \obtd $,\\
$ \btd \bti \obtb \obtr \btd \obtf \obtp \btf \obtd $,\\
$ \btd \bti \btk \obto \obtl \obtk \obts \btg \obtd $,\\
$ \btd \obtg \obtd \obtf \obtr \btf \btd \btf \obtd $,\\
$ \btd \obtg \obtq \obtt \btc \btf \btb \btp \obtd $,\\
$ \btd \obtp \obtn \btu \btb \obtd \obtp \btf \obtd $,\\ ~\\
$ \btd \btg \btp \bte \obtp \obtt \btd \btf \obtd $,\\
$ \btd \btg \obtd \btr \btb \obti \obtk \obti \obtd $,\\
$ \btd \bti \obtb \obtr \btf \btp \btd \btf \obtd $,\\
$ \btd \bti \btk \obto \obtl \obtq \obtp \btf \obtd $,\\
$ \btd \obtg \obtd \obtf \obtr \btf \btq \btg \obtd $,\\
$ \btd \obtg \obtq \obtt \btc \btf \btd \btf \obtd $,\\
$ \btd \obtp \obtn \btu \btb \obtd \btt \btp \obtd $,\\ ~\\
$ \btd \btg \btp \bte \obtp \obtt \btd \obtg \obtd $,\\
$ \btd \btg \obtd \btr \btb \obtq \obtf \btp \obtd $,\\
$ \btd \bti \obtb \obtr \btf \btp \btd \btg \obtd $,\\
$ \btd \bti \btl \btn \btp \obte \btt \btp \obtd $,\\
$ \btd \obtg \obtd \obtf \obtr \bth \obts \btg \obtd $,\\
$ \btd \obtg \obtq \obtt \btd \btf \btb \btp \obtd $,\\
$ \btd \obtp \obtn \btu \btf \btd \obtp \btf \obtd $,\\ ~\\
$ \btd \btg \btp \bte \obtn \btu \btb \btp \obtd $,\\
$ \btd \btg \obtd \btr \btc \btj \obtk \obti \obtd $,\\
$ \btd \bti \obtb \obtr \btf \btq \btd \btf \obtd $,\\
$ \btd \bti \btl \btn \btp \obte \obtf \btp \obtd $,\\
$ \btd \obtg \bts \bta \obtn \obtp \obtf \btp \obtd $,\\
$ \btd \obtg \obtq \obtt \btf \btq \btd \btf \obtd $,\\
$ \btd \obtp \obtt \btd \obtb \obtf \obtp \btf \obtd $,\\ ~\\
$ \btd \btg \btp \bte \obtn \btu \btb \obti \obtd $,\\
$ \btd \btg \obtd \btr \btc \obth \obtk \obti \obtd $,\\
$ \btd \bti \obtb \obtr \btf \obtd \btt \btp \obtd $,\\
$ \btd \bti \btl \btn \btp \obtd \obtp \btf \obtd $,\\
$ \btd \obtg \bts \bta \obtn \btu \btb \btp \obtd $,\\
$ \btd \obtp \obtb \btt \btm \btl \btn \btp \obtd $,\\
$ \btd \obtp \obtt \btd \btf \btq \btd \btf \obtd $,\\ ~\\
$ \btd \btg \btp \bte \obtt \btb \obtq \obtg \obtd $,\\
$ \btd \btg \obtd \obtq \obtg \obtd \obtp \btf \obtd $,\\
$ \btd \bti \obtb \obtr \btg \btp \btd \btf \obtd $,\\
$ \btd \bti \btl \obte \obtp \obtt \btb \btp \obtd $,\\
$ \btd \obtg \bts \bta \obtt \bte \obtp \btf \obtd $,\\
$ \btd \obtp \obtb \btt \obta \obtu \btn \btp \obtd $,\\
$ \btd \obtp \obtt \btd \btf \obtd \btt \btp \obtd $,\\ ~\\
$ \btd \btg \btp \bte \obtt \btd \obtp \btf \obtd $,\\
$ \btd \btg \obtd \obtq \obtg \btr \btb \btp \obtd $,\\
$ \btd \bti \obtb \obtr \btg \btp \btd \btg \obtd $,\\
$ \btd \bti \btl \obte \obtc \btt \btq \btg \obtd $,\\
$ \btd \obtg \bts \bta \obtt \btf \btq \btg \obtd $,\\
$ \btd \obtp \obtb \btt \btp \obte \btt \btp \obtd $,\\
$ \btd \obtp \obtt \btd \obtg \obtd \obtp \btf \obtd $,\\ ~\\
$ \btd \btg \btp \bte \obtt \btd \obtf \btp \obtd $,\\
$ \btd \btg \obtd \obtq \obtg \btr \btb \obti \obtd $,\\
$ \btd \bti \obtb \obtr \btg \btq \btd \btf \obtd $,\\
$ \btd \bti \btl \obte \obtc \obtf \obtp \btf \obtd $,\\
$ \btd \obtg \bts \bta \obtl \obtk \obts \btg \obtd $,\\
$ \btd \obtp \obtb \btt \btp \obte \obtf \btp \obtd $,\\
$ \btd \obtp \obtt \btd \obtg \obtq \obtp \btf \obtd $,\\ ~\\
$ \btd \btg \btp \bte \obtt \bte \obtp \btf \obtd $,\\
$ \btd \btg \obtd \obtq \obtg \obti \obtk \obti \obtd $,\\
$ \btd \bti \obtb \obtr \btg \btq \btd \btg \obtd $,\\
$ \btd \bti \btl \obte \obtu \btn \btq \btg \obtd $,\\
$ \btd \obtg \bts \bta \obtl \obtq \obtp \btf \obtd $,\\
$ \btd \obtp \obtb \btt \btp \obtd \obtp \btf \obtd $,\\
$ \btd \obtp \obtt \btd \obtp \btf \btd \btf \obtd $,\\ ~\\
$ \btd \btg \btp \bte \obtt \btf \btq \btg \obtd $,\\
$ \btd \btg \obtd \obtq \obtg \obtq \obtf \btp \obtd $,\\
$ \btd \bti \obtb \obtr \bth \obtu \btn \btp \obtd $,\\
$ \btd \bti \btl \bto \btk \bth \obts \btg \obtd $,\\
$ \btd \obtg \bts \obth \obtm \obtp \obtf \btp \obtd $,\\
$ \btd \obtp \obtb \obtf \btp \btq \btd \btf \obtd $,\\
$ \btd \obtp \obtt \btd \obtf \btp \btd \btf \obtd $,\\ ~\\
$ \btd \btg \btp \bte \obtl \obtk \obts \btg \obtd $,\\
$ \btd \btg \obtd \obtq \obtp \btd \obtf \btp \obtd $,\\
$ \btd \bti \btd \obtg \obtd \btp \btq \btg \obtd $,\\
$ \btd \bti \btl \bto \btk \bti \btd \btf \obtd $,\\
$ \btd \obtg \bts \obth \obtm \obtt \btd \btf \obtd $,\\
$ \btd \obtp \obtb \obtf \btp \obtd \obtp \btf \obtd $,\\
$ \btd \obtp \obtt \btd \obtf \obtd \obtp \btf \obtd $,\\ ~\\
$ \btd \btg \btp \bte \obtl \obtq \obtp \btf \obtd $,\\
$ \btd \btg \obtd \obtq \obtp \btf \btb \btp \obtd $,\\
$ \btd \bti \btd \obtg \obtd \obtq \obtp \btf \obtd $,\\
$ \btd \bti \btl \bto \btm \bth \obts \btg \obtd $,\\
$ \btd \obtg \bts \obth \obtm \btu \btb \btp \obtd $,\\
$ \btd \obtp \obtb \obtf \btp \obtd \btt \btp \obtd $,\\
$ \btd \obtp \obtt \bte \obtp \btf \btd \btf \obtd $,\\ ~\\
$ \btd \btg \btp \btf \btc \bte \obtp \btf \obtd $,\\
$ \btd \btg \obtd \obtq \obtp \btf \btd \btf \obtd $,\\
$ \btd \bti \btd \obtg \obtd \obtq \obtf \btp \obtd $,\\
$ \btd \bti \btl \bto \btm \btl \btn \btp \obtd $,\\
$ \btd \obtg \bts \obth \obtk \bto \btn \btp \obtd $,\\
$ \btd \obtp \obtb \obtf \obtd \obtq \obtp \btf \obtd $,\\
$ \btd \obtp \obtt \bte \obtp \obtf \obtp \btf \obtd $,\\ ~\\
$ \btd \btg \btp \btf \btc \btf \btd \btf \obtd $,\\
$ \btd \btg \obtd \obtq \obtp \btf \btd \btg \obtd $,\\
$ \btd \bti \btd \obtg \obtd \obtp \obtf \btp \obtd $,\\
$ \btd \bti \btl \bto \btn \btp \btd \btg \obtd $,\\
$ \btd \obtg \bts \obth \obtk \obtq \obtp \btf \obtd $,\\
$ \btd \obtp \obtb \obtf \obtd \obtq \obtf \btp \obtd $,\\
$ \btd \obtp \obtt \bte \obtp \obtt \btd \btf \obtd $,\\ ~\\
$ \btd \btg \btp \btf \btc \btf \btd \btg \obtd $,\\
$ \btd \btg \obtd \obtq \obtp \btf \btd \obtg \obtd $,\\
$ \btd \bti \btd \obtg \btr \btf \btd \btf \obtd $,\\
$ \btd \obtg \obtd \obtq \obtg \obtd \obtp \btf \obtd $,\\
$ \btd \obtg \bts \obth \obtd \btp \btd \btf \obtd $,\\
$ \btd \obtp \obtb \obtf \obtd \obtp \obtf \btp \obtd $,\\
$ \btd \obtf \btp \btd \obtb \obtf \obtp \btf \obtd $,\\ ~\\
$ \btd \btg \btp \btf \btc \btf \btd \obtg \obtd $,\\
$ \btd \btg \obtd \obtq \obtp \btf \obtd \obti \obtd $,\\
$ \btd \bti \btd \obtp \btf \btp \btd \btf \obtd $,\\
$ \btd \obtg \obtd \obtq \obtg \btr \btb \btp \obtd $,\\
$ \btd \obtg \bts \obth \obtd \btp \btq \btg \obtd $,\\
$ \btd \obtp \obtb \obtf \btr \btf \btb \btp \obtd $,\\
$ \btd \obtf \btp \btd \btf \btq \btd \btf \obtd $,\\ ~\\
$ \btd \btg \btp \btf \btc \btj \obtk \obti \obtd $,\\
$ \btd \btg \obtd \obtq \obtf \btp \btd \btf \obtd $,\\
$ \btd \bti \btd \obtp \btf \btq \btd \btf \obtd $,\\
$ \btd \obtg \obtd \obtq \obtg \obtq \obtf \btp \obtd $,\\
$ \btd \obtg \bts \obth \obtd \obtq \obtp \btf \obtd $,\\
$ \btd \obtp \obtb \obtf \obtr \btd \obtf \btp \obtd $,\\
$ \btd \obtf \btp \btd \btg \btp \btd \btf \obtd $,\\ ~\\
$ \btd \btg \btp \btf \btp \obte \btt \btp \obtd $,\\
$ \btd \btg \obtd \obtq \obtf \btp \btd \btg \obtd $,\\
$ \btd \bti \btd \obtp \btf \btq \btd \btg \obtd $,\\
$ \btd \obtg \obtd \obtq \obtp \btd \obtf \btp \obtd $,\\
$ \btd \obtg \bts \obth \obtd \obtq \obtf \btp \obtd $,\\
$ \btd \obtp \obtb \obtu \btm \bti \btd \btf \obtd $,\\
$ \btd \obtf \btp \btd \btg \btq \btd \btf \obtd $,\\ ~\\
$ \btd \btg \btp \btf \btp \obte \obtf \btp \obtd $,\\
$ \btd \btg \obtd \obtq \obtf \btp \btn \btp \obtd $,\\
$ \btd \bti \btd \obtp \obtn \obtp \obtf \btp \obtd $,\\
$ \btd \obtg \obtd \obtq \obtp \btf \btb \btp \obtd $,\\
$ \btd \obtg \bts \obth \btr \bte \obtp \btf \obtd $,\\
$ \btd \obtp \obtb \obtu \btm \btl \btn \btp \obtd $,\\
$ \btd \obtf \btp \btd \obtg \obtq \obtp \btf \obtd $,\\ ~\\
$ \btd \btg \btp \btf \btq \bth \obts \btg \obtd $,\\
$ \btd \btg \obtd \obtq \obtf \btp \btq \btg \obtd $,\\
$ \btd \bti \btd \obtp \obtn \btu \btb \btp \obtd $,\\
$ \btd \obtg \obtd \obtq \obtp \btf \btd \btf \obtd $,\\
$ \btd \obtg \bts \obth \btr \btf \btb \btp \obtd $,\\
$ \btd \obtp \obtb \obtu \btn \obte \obtp \btf \obtd $,\\
$ \btd \obtf \btp \btd \obtp \btf \btd \btf \obtd $,\\ ~\\
$ \btd \btg \btp \btf \obtd \btr \btb \btp \obtd $,\\
$ \btd \btg \obtd \obtq \obtf \btp \obtd \obti \obtd $,\\
$ \btd \bti \btd \obtp \obtt \btd \obtf \btp \obtd $,\\
$ \btd \obtg \obtd \obtq \obtp \btf \btd \btg \obtd $,\\
$ \btd \obtg \bts \obth \btr \btf \btd \btf \obtd $,\\
$ \btd \obtp \obtb \obtu \btn \obte \obtf \btp \obtd $,\\
$ \btd \obtf \btp \btd \obtp \obtt \btd \btf \obtd $,\\ ~\\
$ \btd \btg \btp \btf \obtd \obtq \obtf \btp \obtd $,\\
$ \btd \btg \obtd \obtq \obtf \obtd \obtp \btf \obtd $,\\
$ \btd \bti \btd \obtp \obtt \bte \obtp \btf \obtd $,\\
$ \btd \obtg \obtd \obtq \obtf \btp \btd \btf \obtd $,\\
$ \btd \obtg \bts \obth \obtq \obtf \obtp \btf \obtd $,\\
$ \btd \obtp \obtb \obtu \btn \btp \btd \btf \obtd $,\\
$ \btd \obtf \btp \btf \btb \btp \btd \btf \obtd $,\\ ~\\
$ \btd \btg \btp \btf \obtd \obtp \btd \obtg \obtd $,\\
$ \btd \btg \obtd \obtq \obtf \obtd \btt \btp \obtd $,\\
$ \btd \bti \btd \obtp \obtt \btf \btq \btg \obtd $,\\
$ \btd \obtg \obtd \obtq \obtf \btp \btd \btg \obtd $,\\
$ \btd \obtg \bts \obth \obtq \obtt \btd \btf \obtd $,\\
$ \btd \obtp \obtb \obtr \btd \obtf \obtp \btf \obtd $,\\
$ \btd \obtf \btp \btf \btc \btf \btd \btf \obtd $,\\ ~\\
$ \btd \btg \btp \btf \obtd \obtp \obtf \btp \obtd $,\\
$ \btd \btg \obtd \obtq \obtf \btr \btb \btp \obtd $,\\
$ \btd \bti \btd \obtf \btp \obte \btt \btp \obtd $,\\
$ \btd \obtg \obtd \obtq \obtf \btp \btn \btp \obtd $,\\
$ \btd \obtg \bts \btk \btj \obtk \obts \btg \obtd $,\\
$ \btd \obtp \obtb \obtr \btf \btp \btd \btf \obtd $,\\
$ \btd \obtf \btp \obte \btt \btp \btd \btf \obtd $,\\ ~\\
$ \btd \btg \btq \obtb \btt \obte \obtp \btf \obtd $,\\
$ \btd \btg \obtd \obtq \obtf \btr \btb \obti \obtd $,\\
$ \btd \bti \btd \obtf \btp \obte \obtf \btp \obtd $,\\
$ \btd \obtg \obtd \obtq \obtf \btp \btq \btg \obtd $,\\
$ \btd \obtg \bts \btk \btl \btn \btq \btg \obtd $,\\
$ \btd \obtp \obtb \obtr \btf \btp \btn \btp \obtd $,\\
$ \btd \obtf \btp \obte \obtr \btf \btd \btf \obtd $,\\ ~\\
$ \btd \btg \btq \obtb \btt \obte \btt \btp \obtd $,\\
$ \btd \btg \obtd \obtq \obtf \btt \btq \btg \obtd $,\\
$ \btd \bti \btd \obtf \btp \btq \btd \btf \obtd $,\\
$ \btd \obtg \obtd \obtq \obtf \obtd \obtp \btf \obtd $,\\
$ \btd \obtg \bts \btk \btl \obte \obtp \btf \obtd $,\\
$ \btd \obtp \obtb \obtr \btf \btq \btd \btf \obtd $,\\
$ \btd \obtf \btp \btq \btg \btp \btd \btf \obtd $,\\ ~\\
$ \btd \btg \btq \obtb \btt \obte \obtf \btp \obtd $,\\
$ \btd \btg \obtd \obtq \obtt \btd \obtp \btf \obtd $,\\
$ \btd \bti \btd \obtf \btp \btq \btd \btg \obtd $,\\
$ \btd \obtg \obtd \obtq \obtf \obtd \btt \btp \obtd $,\\
$ \btd \obtg \bts \btk \btl \obte \btt \btp \obtd $,\\
$ \btd \obtp \obtb \obtr \btf \obtd \btt \btp \obtd $,\\
$ \btd \obtf \btp \obtd \btt \btp \btd \btf \obtd $,\\ ~\\
$ \btd \btg \btq \obtb \btt \obta \obts \btg \obtd $,\\
$ \btd \btg \obtd \obtq \obtt \btd \obtf \btp \obtd $,\\
$ \btd \bti \btd \obtf \btp \obtd \obtp \btf \obtd $,\\
$ \btd \obtg \obtd \obtq \obtf \btr \btb \btp \obtd $,\\
$ \btd \obtg \bts \btk \btl \obte \obtf \btp \obtd $,\\
$ \btd \obtp \obtb \obtr \btg \btp \btd \btf \obtd $,\\
$ \btd \obtf \btp \obtd \btt \btq \btd \btf \obtd $,\\ ~\\
$ \btd \btg \btq \obtb \btt \bto \obtk \obti \obtd $,\\
$ \btd \btg \obtd \obtq \obtt \btf \btq \btg \obtd $,\\
$ \btd \bti \btd \obtf \btp \obtd \btt \btp \obtd $,\\
$ \btd \obtg \obtd \obtq \obtf \btt \btq \btg \obtd $,\\
$ \btd \obtg \bts \btk \obtj \bto \btn \btp \obtd $,\\
$ \btd \obtp \obtb \obtr \btg \btq \btd \btf \obtd $,\\
$ \btd \obtf \obtd \obti \obtm \obtt \btd \btf \obtd $,\\ ~\\
$ \btd \btg \btq \obtb \btt \btp \obtd \obti \obtd $,\\
$ \btd \btg \obtd \btt \btm \bth \obts \btg \obtd $,\\
$ \btd \bti \btd \obtf \obtr \btg \btq \btg \obtd $,\\
$ \btd \obtg \obtd \obtq \obtt \btd \obtp \btf \obtd $,\\
$ \btd \obtg \bts \btk \obto \btl \btn \btp \obtd $,\\
$ \btd \obtp \obtb \obtr \bth \obtu \btn \btp \obtd $,\\
$ \btd \obtf \obtd \obtq \obtp \btf \btd \btf \obtd $,\\ ~\\
$ \btd \btg \btq \obtb \btt \btq \btd \obtg \obtd $,\\
$ \btd \btg \obtd \btt \btm \bti \btd \btf \obtd $,\\
$ \btd \bti \btk \bth \obto \btj \obtk \obti \obtd $,\\
$ \btd \obtg \obtd \obtq \obtt \btd \obtf \btp \obtd $,\\
$ \btd \obtg \obtq \obtg \obtd \btp \btq \btg \obtd $,\\
$ \btd \obtp \btf \bte \obtp \btf \btd \btf \obtd $,\\
$ \btd \obtf \obtd \obtq \obtf \btp \btd \btf \obtd $,\\ ~\\
$ \btd \btg \btq \obtb \obtf \obtp \btd \obtg \obtd $,\\
$ \btd \btg \obtd \btt \btm \btj \obtk \obti \obtd $,\\
$ \btd \bti \btk \bth \obto \btl \btn \btp \obtd $,\\
$ \btd \obtg \obtd \obtq \obtt \btf \btq \btg \obtd $,\\
$ \btd \obtg \obtq \obtg \obtd \obtq \obtp \btf \obtd $,\\
$ \btd \obtp \btf \bte \obtp \obtf \obtp \btf \obtd $,\\
$ \btd \obtf \obtd \obtp \btf \btp \btd \btf \obtd $,\\ ~\\
$ \btd \btg \btq \obtb \obts \btg \btn \btp \obtd $,\\
$ \btd \btg \obtd \btt \btm \btl \btn \btp \obtd $,\\
$ \btd \bti \btk \bti \obtb \btt \btq \btg \obtd $,\\
$ \btd \obtg \obtd \obtp \obtb \btt \btq \btg \obtd $,\\
$ \btd \obtg \obtq \obtg \obtd \obtq \obtf \btp \obtd $,\\
$ \btd \obtp \btf \bte \obtp \obtt \btd \btf \obtd $,\\
$ \btd \obtf \obtd \obtf \btr \btf \btd \btf \obtd$.\\ ~\\
\end{quote}
\end{proof}

\section{Code availability}
\label{Sec:CodeAvailability}

The $54$-Brinkhuis $952$-triple can be verified using the code and script
to run it found at \cite{DouglasURL}.
The code is general and can be used to find special Brinkhuis triples for
general values of $n$.
While the code is fast for small values of $n$, e.g., $n \leq 35$, it will
take a {\em very} long time for $n=54$.
In addition, the output files will take Terabytes of disk space.

A sample build and run of the code for $n=35$ is the following:
\begin{quote}
\% script log\\
Script started on Sat Apr 16 19:20:23 2016\\
\% make runit N=35\\
clang -w -O3 brinkhuis.c -o brinkhuis\\
clang -w -O3 brinkhuis2t1.c -o brinkhuis2t1\\
clang -w -O3 brinkhuis2t2.c -o brinkhuis2t2\\
./doit 35\\
Success [  1]: 01202102010212010201202120102120210\\
Success [  2]: 01202102010212010201210120102120210\\
Success [  3]: 01202102010212012101202120102120210\\
Success [  4]: 01202102010212021012021201020120210 (palindromic)\\
Success [  5]: 01202102012101201020121020102120210\\
Success [  6]: 01202102012101202120102012102120210\\
Success [  7]: 01202102012101202120121020102120210\\
Success [  8]: 01202102012102120210121020102120210\\
Success [  9]: 01202120102012101201021012102120210\\
Success [ 10]: 01202120102012101202102012102120210\\
Success [ 11]: 01202120102012102010210120102120210\\
Success [ 12]: 01202120102012102010212012102120210\\
Success [ 13]: 01202120102012102120121020102120210 (palindromic)\\
Success [ 14]: 01202120102101210201210120102120210 (palindromic)\\
Success [ 15]: 01202120102120210120212012102120210\\
Success [ 16]: 01202120102120210201021012102120210\\
Success [ 17]: 01202120121020102120102012102120210 (palindromic)\\
Success [ 18]: 01210201021012010212021012010201210\\
Success [ 19]: 01210201021012010212021012021201210\\
Success [ 20]: 01210201021012021020120212010201210\\
Success [ 21]: 01210201021012021201020121021201210\\
Success [ 22]: 01210201021012021201210212010201210\\
Success [ 23]: 01210201021012102120102012021201210\\
Success [ 24]: 01210201021012102120121012010201210 (palindromic)\\
Success [ 25]: 01210201021012102120210201021201210\\
Success [ 26]: 01210201021201020120210121021201210\\
Success [ 27]: 01210201021201020120210201021201210\\
Success [ 28]: 01210201021201210120102012021201210\\
Success [ 29]: 01210201021201210212021012021201210\\
Success [ 30]: 01210201021202101201020121021201210\\
Success [ 31]: 01210201021202101201021012021201210\\
Success [ 32]: 01210201021202102010210121021201210\\
Success [ 33]: 01210201021202102012101201021201210\\
Success [ 34]: 01210212010201202101210201021201210\\
Success [ 35]: 01210212010201210120102012021201210\\
Success [ 36]: 01210212010201210120210121021201210\\
Success [ 37]: 01210212012101202120102012021201210\\
Success [ 38]: 01210212012102010212021012021201210\\
Success [ 39]: 01210212012102012021020121021201210 (palindromic)\\
Success [ 40]: 01210212021020102120102012021201210 (palindromic)\\
Done:  a1=109, a1p=  9, a1n= 50;   b1= 30, b1p=  4, b1n= 13\\
       a2=142, a2p=  6, a2n= 68;   b2= 43, b2p=  3, b2n= 20\\
Generated a1 and a2 files.\\
17 admissible words of length 35 read in\\
admissible triples:\\
328 admissible triples found\\
Generated t1.\\
23 admissible words of length 35 read in\\
admissible triples:\\
483 admissible triples found\\
Generated t2.\\
\% exit\\
exit\\
 ~\\
Script done on Sat Apr 16 19:20:39 2016\\
\end{quote}

\noindent
2000 {\em Mathematics Subject Classifications}:
Primary 57M15;
Secondary 11Y55

\noindent
(Concerned with sequence \seqnum{A006156})

\end{document}